\title{Teichm\"{u}ller geometry of moduli space, II:\\ 
${\cal M}(S)$ seen from far away}
\author{Benson Farb and Howard Masur \thanks{Both authors are
supported in part by the NSF.}}
\theoremstyle{plain}
\newtheorem{theorem}{Theorem}
\newtheorem{lemma}[theorem]{Lemma}
\newtheorem{corollary}[theorem]{Corollary}
\def\proof{{\bf {\medskip}{\noindent}Proof. }}
\def\endproof{$\diamond$ \bigskip}
\def\title{\em}
\newcommand\R{\mbox{\bf R}}
\newcommand\hyp{\mbox{\bf H}}
\newcommand\Q{\mbox{\bf Q}}
\newcommand\Qrank{\Q\mbox{-rank}}
\newcommand\newpar{\medskip \noindent}
\DeclareMathOperator\teich{Teich}
\DeclareMathOperator\Teich{\teich}
\DeclareMathOperator\Mod{Mod}
\DeclareMathOperator\M{{\cal M}}
\DeclareMathOperator\genus{genus}
\DeclareMathOperator\CC{{\cal C}}
\DeclareMathOperator\V{{\cal V}}
\DeclareMathOperator\cone{Cone}
\begin{document}
\maketitle


\section{Introduction}

Let $S=S_{g,n}$ be a closed, orientable surface with genus $g\geq 0$ 
with $n\geq 0$ marked points, and let $\Teich(S)$ be the associated
Teichm\"{u}ller space of marked conformal classes or (fixed area) constant curvature 
metrics on $S$.  Endow $\Teich(S)$ with the Teichm\"{u}ller metric $d_{\Teich(S)}(\cdot,\cdot)$.   
Recall that for marked conformal structures $X_1,X_2\in\Teich(S)$ we define
$$d_{\Teich(S)}(X_1,X_2)=\frac{1}{2}\log K$$
where $K\geq 1$ is the least number such that there is a $K$-quasiconformal mapping between the marked structures $X_1$ and $X_2$.   The
mapping class group $\Mod(S)$ acts properly discontinuously and
isometrically on $\Teich(S)$, thus inducing a metric $d_{\M(S)}(\cdot,\cdot)$ on the quotient
moduli space $\M(S):=\Teich(S)/\Mod(S)$.  Let $\pi:\Teich(S)\to \M(S)$ be the natural projection. 

The goal of this paper is to build an ``almost isometric'' simplicial 
model for $\M(S)$, from which we will determine the tangent
cone at infinity of $\M(S)$.   In analogy with the case of locally symmetric spaces, this can 
be viewed as a step in building a ``reduction theory'' for the action of 
$\Mod(S)$ on $\Teich(S)$.  Other results in this direction can be found in \cite{Le}.

\newpar
{\bf Moduli space seen from far away. }Gromov formalized the idea of
``looking at a metric space $(X,d)$ from far away'' by 
introducing the notion of the
{\em tangent cone at infinity} of $(X,d)$.  This metric space, denoted 
$\cone(X)$, is defined to be a Gromov-Hausdorff limit of based 
metric spaces (where basepoint $x\in X$ is fixed once and for all):

$$\cone(X):=\lim_{\epsilon\to 0}\ (X,\epsilon d)$$

So, for example, any compact Riemannian manifold $M$ has
$\cone(X)=\ast$, a one point space.  
Let $M=\Gamma\backslash G/K$ be an arithmetic, locally symmetric 
manifold (or orbifold); so $G$ is a semisimple algebraic $\Q$-group, $K$ a maximal compact
subgroup, and $\Gamma$ an arithmetic lattice.  
Hattori, Leuzinger and Ji-MacPherson proved that $\cone(M)$ is a metric
cone over the quotient by $\Gamma$ 
of the spherical Tits building $\Delta_{\Q}(G)$
associated to $G_{\Q}$.  Here the metric on the cone on a maximal simplex of 
$\Delta_{\Q}(G)$ makes it isometric to the standard (Euclidean) metric 
on a Weyl chamber in $G/K$.  In particular they deduce:
$$\Qrank(\Gamma)=\dim(\cone(\Gamma\backslash G/K))$$

Our first result is a determination of the metric space 
$\cone(\M(S))$.  The role of the
rational Tits building will be played by the {\em complex of curves} $\CC(S)$ 
on $S$.  Recall that, except for some sporadic cases discussed below,
the complex $\CC(S)$ is defined to be the simplicial complex whose
vertices are (isotopy classes of) simple closed curves on $S$, and whose
$k$-simplices are $(k+1)$-tuples of distinct isotopy classes which can
be realized simultaneously as disjoint curves on $S$.  Note that
$\CC(S)$ is a $d$-dimensional simplicial complex, where $d=3g-4+n$.  While $\CC(S)$ is
locally infinite, its quotient by the natural action of $\Mod(S)$ is a 
{\em finite orbicomplex}, by which we mean a finite simplicial
complex where each simplex is quotiented out by the action of a finite group.   The quotient can be made a simplicial complex by looking at the action on the barycentric subdivision of $\CC(S)$.
Denote by $P$ the natural quotient map $$P:\CC(S)\to \CC(S)/\Mod(S).$$
We now build a metric space which will serve as a coarse metric model
for $\M(S)$.  Let $\widetilde{\V}(S)$ denote the topological cone 
$$\widetilde{\V}(S):=
\frac{\displaystyle [0,\infty)\times \CC(S)}{\displaystyle 
\{0\}\times \CC(S)}$$ 
For each maximal simplex $\sigma$ of $\CC(S)$, we will think of the cone over 
$\sigma$ as an orthant with coordinates $(x_1,\ldots ,x_d)$.  We endow this orthant with the 
standard $\sup$ metric:
$$d((x_1,\ldots ,x_{d}), (y_1,\ldots,y_d)):=\frac{1}{2}
\max_{1 \leq i \leq d}|x_i-y_i|.$$
The factor of $\frac{1}{2}$ is designed to be consistent with the definition of the Teichm\"{u}ller metric.

The metrics on the cones on any two such maximal simplices clearly agree
on (the cone on) any common face.  We can thus endow $\widetilde{\V}(S)$
with the corresponding path metric.  Note that the natural action of
$\Mod(S)$ on $\widetilde{\V}(S)$ induces an isometric action of $\Mod(S)$ on
$\widetilde{\V}(S)$.  The quotient
$$\V(S):=\widetilde{\V}(S)/\Mod(S)$$ 
thus inherits a well-defined metric. 
The example $\V(S_{1,2})$ is 
described in Figure \ref{figure:t12}.  To endow $\V(S)$ with 
the structure of a simplicial complex instead of an orbicomplex, we can simply replace 
$\CC(S)$ with its barycentric subdivision in the construction above.

Our main result is that $\V(S)$ provides a simple and reasonably
accurate geometric model for $\M(S)$.

\begin{theorem}
\label{theorem:model}
There is a {\em $(1,D)$-quasi-isometry} $\Psi:\V(S)\to \M(S)$.  
 That is, there is a constant $D=D(S)\geq 0$ such
that :
\begin{itemize}
\item $|d_{\V(S)}(x,y)-d_{\M(S)}(\Psi(x),\Psi(y))|\leq D$ 
for each $x,y\in \V(S)$, and
\item The $D$-neighborhood of $\Psi(\V(S))$ in
$\M(S)$ is all of $\M(S)$.
\end{itemize}
\end{theorem}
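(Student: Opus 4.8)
The plan is to build $\Psi$ explicitly from Fenchel--Nielsen coordinates, to model the thin parts of $\M(S)$ and of $\V(S)$ as quotients of products of rays via Minsky's product regions theorem, and then to establish the additive distance estimate locally before assembling it globally using the finiteness of $\CC(S)/\Mod(S)$. To construct $\Psi$, fix a Margulis constant $\epsilon_0>0$ and a decreasing homeomorphism $\ell\colon[0,\infty)\to(0,\epsilon_0]$ with $\ell(0)=\epsilon_0$, whose precise decay rate will be pinned down later so that the relevant metrics line up. Each maximal simplex $\sigma$ of $\CC(S)$ is the isotopy class of a pants decomposition $\{\alpha_1,\dots,\alpha_N\}$ of $S$ with $N=3g-3+n$, and its cone is the orthant of points $(x_1,\dots,x_N)$, $x_i\ge 0$. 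Send such a point to the marked hyperbolic surface whose Fenchel--Nielsen coordinates relative to $\{\alpha_i\}$ are length $\ell(x_i)$ and twist $0$ on each $\alpha_i$. Since a pair of pants with prescribed boundary lengths has a unique hyperbolic structure, this is a well-defined point of $\Teich(S)$, continuous in $(x_1,\dots,x_N)$, and on the cone over a common face of two maximal simplices the dropped curves have length $\ell(0)=\epsilon_0$, so the two definitions agree there. Up to a harmless choice of conventions for twist coordinates this defines a $\Mod(S)$--equivariant map $\widetilde{\V}(S)\to\Teich(S)$, which descends to a continuous map $\Psi\colon\V(S)\to\M(S)$; the sporadic low-complexity surfaces are handled by the obvious analogues.

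\emph{A model for the thin part.} The main input is Minsky's product regions theorem: for a multicurve $\beta_1,\dots,\beta_k$ the region $\mathrm{Thin}(\beta_1,\dots,\beta_k)\subset\Teich(S)$ on which every $\ell_{\beta_j}<\epsilon_0$ is, up to additive error $D_0=D_0(\epsilon_0,S)$, isometric to the $\sup$-metric product $\big(\prod_{j=1}^k\hyp_{\beta_j}\big)\times\Teich\big(S\setminus\bigcup_j\beta_j\big)$, where $\hyp_{\beta_j}$ is a horoball in the upper half plane carrying $i/\ell_{\beta_j}$ in the imaginary direction and the twist in the real direction, with the Teichm\"{u}ller metric a fixed multiple of the hyperbolic one. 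On passing to $\M(S)$, the Dehn twist about $\beta_j$ acts on $\hyp_{\beta_j}$ as the parabolic $z\mapsto z+1$, so $\hyp_{\beta_j}/\langle z\mapsto z+1\rangle$ is a cusp; since the geodesic between two deep points of a cusp is essentially vertical (the horizontal cross-section having diameter $O(\epsilon_0)$), this cusp is $(1,O(1))$-quasi-isometric to the ray $[0,\infty)$ via $z\mapsto c\log(\mathrm{Im}\,z)$. Combined with Mumford compactness (the $\epsilon_0$-thick part of $\M(S\setminus\bigcup_j\beta_j)$ has bounded diameter), the image of $\mathrm{Thin}(\beta_1,\dots,\beta_k)$ in $\M(S)$ is $(1,O(1))$-quasi-isometric to $[0,\infty)^k$ with the $\sup$ metric. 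Choosing the decay rate of $\ell$ to carry the $\tfrac12$-scaled data across makes this agree, up to $O(1)$, with the orthant over $\sigma$ in $\V(S)$. In particular, when $x$ and $y$ lie over a single maximal simplex the complementary Teichm\"{u}ller factor is a point (pants have no moduli), the computation reduces to vertical geodesics in cusps, and $d_{\M(S)}(\Psi x,\Psi y)=d_{\V(S)}(x,y)+O(1)$.

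\emph{Global estimate and coarse surjectivity.} Assembling the finitely many local models (finitely many simplices in $\CC(S)/\Mod(S)$), with uniformly bounded transition regions between thin parts and between the thin and thick parts, yields $|d_{\V(S)}(x,y)-d_{\M(S)}(\Psi x,\Psi y)|\le D$ for all $x,y$. The lower bound rests on the fact that for a short curve the quantity $\tfrac12\log(1/\ell_\beta(Z))\asymp\tfrac12\log\Ext_\beta(Z)$ is coarsely $1$-Lipschitz in $Z\in\M(S)$ (Kerckhoff's formula for the Teichm\"{u}ller metric), which bounds the rate at which the orthant coordinates of the image point can vary along a path in $\M(S)$; a short combinatorial argument then shows an efficient path between two far-apart thin parts must traverse the bounded thick core, just as a $\V(S)$-geodesic passes near the cone point, giving $d_{\M(S)}(\Psi x,\Psi y)\ge d_{\V(S)}(x,y)-D$. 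For the upper bound one subdivides a $\V(S)$-geodesic into subpaths each lying over a single maximal simplex and replaces each by an explicit path in $\M(S)$ of essentially the same length via the product-region model. Finally, for coarse surjectivity, given $Z\in\M(S)$ let $\beta_1,\dots,\beta_k$ be the curves with $\ell_{\beta_i}(Z)<\epsilon_0$ --- a simplex of $\CC(S)$ --- extend to a pants decomposition $\{\alpha_i\}$, and take $v\in\V(S)$ over that simplex with coordinate $\tfrac12\log(1/\ell_{\beta_i}(Z))$ on each short curve and $0$ on the rest. Then $\Psi(v)$ and $Z$ have the same short curves of essentially the same lengths and differ only in bounded data: twist parameters, of bounded diameter in the quotient, and the complementary thick subsurface, lying in a compact part of a lower moduli space; hence $d_{\M(S)}(\Psi(v),Z)\le D$.

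\emph{The main obstacle.} The genuinely delicate point is the passage from the local product-region description to the global statement: one must rule out ``shortcuts'' in $\M(S)$ between distinct thin parts and verify that the several thin directions combine in the $\sup$ fashion (rather than, say, $\ell^1$ or $\ell^2$) built into $\V(S)$. This is exactly where Minsky's theorem enters essentially, but it must be supplemented both by a uniform patching over the finitely many simplices and their transition regions and by the coarse-Lipschitz control of the extremal-length coordinates, which is what keeps a path in $\M(S)$ from cutting the $\V(S)$-distance. A secondary, purely technical nuisance is arranging that $\Psi$ be simultaneously continuous, strictly $\Mod(S)$-equivariant, and compatible with the barycentric-subdivision (orbicomplex) structure on $\V(S)$.
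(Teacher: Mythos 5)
Your construction of $\Psi$ is essentially identical to the paper's (send $(x_1,\dots,x_d)$ over a pants-decomposition simplex to the hyperbolic structure with lengths $\epsilon_0 e^{-x_i}$ and zero twists; well-definedness on shared faces because dropped coordinates land at $\epsilon_0$; equivariance and continuity follow), and the use of Minsky's product regions theorem as the metric input is also the same.  Your ``almost onto'' argument (short curves give a simplex; extend to a pants decomposition; bounded twist diameter in the quotient by Dehn twists plus Mumford compactness of the complementary thick moduli space) is a correct variant of the paper's Bers-constant argument and is fine.

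Where the proposal falls short is exactly at what you yourself flag as the delicate point.  The global lower bound $d_{\V(S)}(x,y)\le d_{\M(S)}(\Psi x,\Psi y)+D$ is not reducible to ``an efficient path between two far-apart thin parts must traverse the bounded thick core, just as a $\V(S)$-geodesic passes near the cone point.''  That intuition is false as stated: for $x=(L,0,\dots,0)$ and $y=(0,\dots,0,L)$ in the cone over a single maximal simplex, the sup-metric distance is $L/2$ and is realized by geodesics that stay at distance about $L/2$ from the cone point; more generally, whenever the simplices of $x$ and $y$ share a face, a geodesic can follow the common face and avoid the cone point entirely.  The paper replaces this intuition with a precise \emph{Path Lemma}: (i) any two points of $\V(S)$ are joined by a geodesic entering each simplex at most once, and (ii) any two points of $\Psi(\V(S))$ are joined by a $(1,C')$-quasi-geodesic with the same property, both proved by tracking the single coordinate $\gamma_1$ that realizes the sup and showing that any path (in $\V(S)$, or in $\M(S)$ via Minsky's lower bound on the $\hyp^2$-factor of $\gamma_1$) accrues at least $\tfrac12|y_1-x_1|$ of length.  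Finiteness of $\CC(S)/\Mod(S)$ then bounds the number of simplex transitions and hence the total additive error.  This lemma is the actual ``short combinatorial argument'' you invoke; it is not short, it is the technical heart of the theorem, and the argument you sketch in its place would not succeed without essentially reproving it.  Your Kerckhoff/extremal-length coarse-Lipschitz observation is correct and is implicitly parallel to the role played by inequality~(\ref{eq:m1}), but on its own it bounds the variation of a \emph{single} coordinate along a path; you still must show that some one curve's coordinate is forced to vary by $d_{\V(S)}(x,y)-O(1)$, and that is precisely what the Path Lemma supplies.
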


The main ingredient in our proof of Theorem \ref{theorem:model} is a
theorem of Minsky \cite{Mi}, which determines up to an additive factor
the Teichm\"{u}ller metric near infinity in $\Teich(S)$.

\begin{figure}
\label{figure:t12}
\begin{center}
\includegraphics{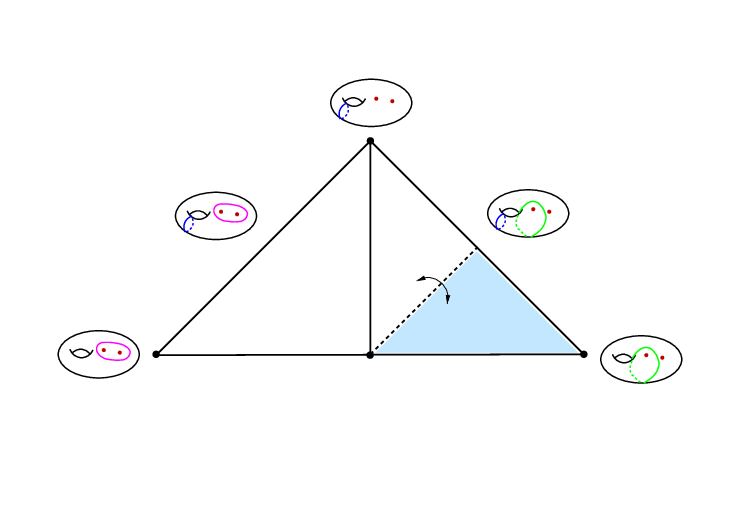}
\end{center}
\vspace{2in}
\caption{The metric space $\V(S_{1,2})$.  The fundamental domain for the
action of $\Mod(S)$ on $\CC(S)$ is the union of two edges, one
corresponding to a separating/nonseparating pair of curves, the other to
a nonseparating/nonseparating pair. These are the only combinatorial types. 
 Note that the latter edge has an order two symmetry, corresponding to the mapping 
class which switches
the curves.  Thus $\V(S)$ is the union of a Euclidean
quadrant and a quotient of a Euclidean quadrant by a reflection along
the $y=x$ ray.}
\end{figure}

It is clear that Theorem \ref{theorem:model} implies that 
$\cone(\M(S))=\cone(\V(S))$.   Further, it is clear that multiplying the given metric on 
$\V(S)$ by any fixed constant gives a metric space which is isometric (via the dilatation) 
to the original metric.  In particular, $\cone(\V(S))$ is isometric to $\V(S)$ itself.   
We thus deduce the following.

\begin{corollary}
\label{corollary:faraway}
$\cone(\M(S))$ is isometric to $\V(S)$.
\end{corollary}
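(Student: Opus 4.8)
\noindent\emph{Proof proposal.} The plan is to deduce the corollary from Theorem~\ref{theorem:model} together with the evident self-similarity of the model space $\V(S)$; no new information about the geometry of $\M(S)$ is needed. Fix as basepoint the cone point $x_0\in\V(S)$ and set $p_0:=\Psi(x_0)\in\M(S)$; all Gromov--Hausdorff limits below are understood to be pointed, based at these points (the tangent cone at infinity being independent of the choice of basepoint in any case).

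First I would record the scale-invariance of $\V(S)$ precisely. For $\lambda>0$ the radial dilation $h_\lambda(t,c):=(\lambda t,c)$ on $[0,\infty)\times\CC(S)$ descends to a homeomorphism of the cone $\widetilde{\V}(S)$. On the orthant over each maximal simplex of $\CC(S)$ it is the linear map $(x_1,\dots,x_d)\mapsto(\lambda x_1,\dots,\lambda x_d)$, which multiplies the $\sup$ metric by $\lambda$; since $d_{\widetilde{\V}(S)}$ is the path metric assembled from these orthant metrics, $h_\lambda$ multiplies $d_{\widetilde{\V}(S)}$ by $\lambda$ globally. Because the action of $\Mod(S)$ on $\widetilde{\V}(S)$ fixes the $[0,\infty)$-coordinate (it only permutes the simplicial structure of $\CC(S)$), the dilation $h_\lambda$ commutes with this action, hence descends to a self-map of $\V(S)$ with the same scaling property, fixing $x_0$. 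Thus for every $\lambda>0$ the based space $(\V(S),\lambda d_{\V(S)},x_0)$ is isometric, via a basepoint-preserving isometry, to $(\V(S),d_{\V(S)},x_0)$. In particular the family $\epsilon\mapsto(\V(S),\epsilon d_{\V(S)},x_0)$ is constant up to isometry, so its pointed Gromov--Hausdorff limit exists and $\cone(\V(S))$ is isometric to $\V(S)$.

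Next I would transport this across the quasi-isometry. Rescaling both metrics by $\epsilon$ turns the $(1,D)$-quasi-isometry $\Psi$ of Theorem~\ref{theorem:model} into a $(1,\epsilon D)$-quasi-isometry between $(\V(S),\epsilon d_{\V(S)})$ and $(\M(S),\epsilon d_{\M(S)})$. By the standard estimate relating $(1,\delta)$-quasi-isometries to the Gromov--Hausdorff metric, the pointed Gromov--Hausdorff distance between $(\V(S),\epsilon d_{\V(S)},x_0)$ and $(\M(S),\epsilon d_{\M(S)},p_0)$ is $O(\epsilon D)\to 0$. Hence the two families have the same pointed Gromov--Hausdorff limit, so $\cone(\M(S))$ exists and equals $\cone(\V(S))$, which by the previous step is isometric to $\V(S)$.

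The argument is soft once Theorem~\ref{theorem:model} is available, so I expect a short write-up. The only points needing care are the bookkeeping for pointed Gromov--Hausdorff convergence — in particular spelling out that a $(1,\delta)$-quasi-isometry with $\delta\to0$ forces convergence of the based spaces, and checking that one may legitimately use the cone point and its image as basepoints — and verifying that the radial dilation really does commute with the $\Mod(S)$-action so that self-similarity descends to the quotient orbicomplex (equivalently, to its barycentric subdivision). Neither of these is a genuine obstacle.
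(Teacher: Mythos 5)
Your proposal follows the same route as the paper: it deduces $\cone(\M(S))=\cone(\V(S))$ from the $(1,D)$-quasi-isometry of Theorem~\ref{theorem:model} (since the additive error scales to zero), and it identifies $\cone(\V(S))$ with $\V(S)$ itself via the radial dilation that rescales the $\sup$ metric and descends to the quotient. The paper states these two steps in a single sentence each; your write-up just fills in the bookkeeping (pointed GH convergence, equivariance of the dilation), which is accurate but not a different argument.
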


Using different methods, 
Leuzinger \cite{Le} has independently proven that $\V(S)$ is 
bilipschitz homeomorphic to $\cone(\M(S))$.  His methods do not seem to yield the isometry type of 
$\cone(\M(S))$. 

\begin{remarks}
\begin{enumerate}

\item Corollary \ref{corollary:faraway} has applications to metrics of
positive scalar curvature.  Namely, it is a key ingredient in the proof 
by Farb-Weinberger that, while $\M(S)$ admits a metric
of positive scalar curvature for most $S$ (e.g. when $\genus(S)>2$), it
admits no metric with the same quasi-isometry type as the
Teichm\"{u}ller metric on $\M(S)$.  See \cite{FW}.

\item For locally symmetric $M$, we know that $\cone(M)$ is
nonpositively curved in the ${\rm CAT}(0)$ sense.  In contrast, 
$\V(S)$ strongly exhibits aspects of {\em positive} curvature, since
even within the cone on a single simplex, any
two points $x,y\in \V(S)$ have whole families of distinct geodesics 
between them, and these geodesics get arbitrarily far apart as
$d(x,y)\to\infty$.  This is a basic property of the $\sup$ metric on a quadrant.

\item Corollary \ref{corollary:faraway} implies that any metric on $\M(S)$ 
quasi-isometric to the Teichm\"{u}ller metric must have a cone which is bilipshitz homeomorphic 
to $\V(S)$.   
\end{enumerate}
\end{remarks}

The authors would like to thank the referee for some extremely helpful comments. 

\section{The proof of Theorem \ref{theorem:model}}

\subsection{Minsky's Product Theorem}

In this subsection we recall some work of Minsky which will be crucial for what follows.  

Let $d=3g-3+n$.  Fix $\epsilon>0$ smaller than the Margulis
constant for hyperbolic surfaces.  
Let ${\cal C}=\{\gamma_1,\ldots,\gamma_p\}$ be a collection of distinct, 
disjoint, nontrivial homotopy classes of simple closed curves; this is a simplex in $\CC(S)$.   Let 
$$\Omega_{\cal C}(\epsilon):=\{X\in\Teich(S): \ell_X(\gamma_i)<\epsilon \mbox{\ for each\ }i=1,\ldots,p\}.$$  

Extend ${\cal C}$ to a maximal collection $\{\gamma_1,\ldots,
\gamma_d\}$ of homotopy classes of simple closed curves.  
Let $\{\theta_i,\ell_i)\}$ denote the corresponding Fenchel-Nielsen 
coordinates on $\Omega_{\cal C}(\epsilon)$.  Recall that Fenchel-Nielsen 
coordinates give global coordinates on $\Teich(S)$; henceforth we will 
identify points in $\Teich(S)$ with their corresponding coordinates.  

Consider the Teichm\"{u}ller 
space $\Teich(S\setminus {\cal C})$, which is the space of complete, finite 
area hyperbolic metrics on $S\setminus {\cal C}$.  Note that the coordinates 
$\{(\theta_i,\ell_i): i>p\}$ give Fenchel-Nielsen 
coordinates on $\Teich(S\setminus {\cal C})$.

Let 
$$\Phi=(\Phi_1,\Phi_2):\Omega_{\cal C}(\epsilon)\to \Teich(S\setminus {\cal C})\times\prod_{i=1}^p\hyp^2$$
be defined by
$$\Phi((\theta_1,\ldots,\theta_d, \ell_1,\ldots ,\ell_d,)):=
(\theta_{p+1},\ldots,\theta_d, \ell_{p+1},\ldots ,\ell_d,)\times \prod_{i=1}^p(\theta_i,1/\ell_i).$$
Notice that we are changing the last set of length coordinates from $\ell$ to $1/\ell$ giving coordinates in the upper half-space model of $\hyp^2$. We give $\hyp^2$ the metric $ds^2=\frac{1}{4}(dx^2+dy^2)/y^2$.  Note that the factor of $\frac{1}{4}$ leads to a factor of 
$\frac{1}{2}$ in the distance, and is consistent with the factor of $\frac{1}{2}$ in the metric on the 
Euclidean octant.  If 
 $S\setminus {\cal C}$ is disconnected, then $\Teich(S\setminus {\cal C})$ is itself a 
product of the Teichmuller spaces of the components of $S\setminus {\cal C}$; 
we endow this total  product space itself with the $\sup$ metric,
denoted by $d$.  We remark that 
$\Phi$ is a homeomorphism onto its image, and its image is 
$\Teich(S\setminus{\cal C})\times\prod_{i=1}^p\{(x_i,y_i)\in\hyp^2: y_i>1/\epsilon\}$.

The following was proved in \cite{Mi}.

\begin{theorem}[Minsky Product Theorem]
\label{thm:Min}
With notation as above, there exists $D$ such that for all $X,Y\in 
\Omega_{\cal C}(\epsilon)$,  
$$|d(\Phi(X),\Phi(Y))-d_{\Teich(S)}(X,Y)|\leq D.$$
\end{theorem}

We will need the following lemma about distances in $\M(S)$.

\begin{lemma}
\label{lemma:ratios}
Given constants $C,C'$ there is a constant $C''$ with the following property. Let $\sigma=\{\alpha_1,\ldots ,\alpha_d\}$ be a maximal simplex of $\widetilde{V}(S)$.  
Let $X,Y\in\Teich(S)$ be such that $\ell_X(\alpha_i)\leq C$ and $\ell_Y(\alpha_i)\leq C$ for each $i$.  Suppose also that  $|\log(\ell_X(\alpha_i)/\ell_Y(\alpha_i))|\leq C'$.  Then $d_{\M(S)}(\pi(X),\pi(Y))\leq C''$.
\end{lemma}

\proof
This  follows from Theorem~\ref{thm:Min}.  We can find a
point $Y'$ which differs from $Y$ by Dehn twists about curves in
$\sigma$ so that the Fenchel-Nielsen twist coordinates of $X,Y'$
have bounded difference. Now we consider the list of curves
shorter than $\epsilon$ on both $X$ and $Y'$.  Since the ratios of lengths of these short curves are bounded above, as are the differences in twist coordinates,  it follows that the distances in the corresponding $\hyp^2$ factors are bounded.  The complement of
these short curves determines a boundary Teichmuller space. The lengths
of the remaining curves are bounded above and below, giving that the surfaces have a bounded distance from each other in this boundary Teichm\"{u}ller space.  The existence of $C''$ now follows from Theorem~\ref{thm:Min}.
\endproof

\subsection{Defining the map $\Psi$}

We will define a map $\widetilde{\Psi}:\widetilde{\V}(S)\to\M(S)$
by giving its value on a representative of each $\Mod(S)$-orbit
in $\widetilde{\V}(S)$, and then define $\widetilde{\Psi}$ to be
constant on orbits.  It will then follow that $\widetilde{\Psi}$
induces a map $\Psi:\V(S)\to\M(S)$.  While this map will not be
continuous, we will prove that it is a $(1,D)$-quasi-isometry for
some $D\geq 0$. 

Fix a (finite) collection of maximal simplices that represent all combinatorial types.
We will first define $\widetilde{\Psi}$ on the open cone over this collection.   
Thus let $\sigma$ be one of these maximal simplices of
$\CC(S)$ representing  a maximal collection of
disjoint simple closed 
curves $\{\alpha_1,\ldots ,\alpha_{d}\}$.  Again we think of the cone on $\sigma$,
as a subspace of $\widetilde{\V}(S)$, as an octant in
$\R^d$ with coordinates $x_1,\ldots ,x_d$, endowed with the
$\sup$ metric. Let $\Mod(S,\sigma)$  be the subgroup of $\Mod(S)$ that fixes $\sigma$. It acts on the open cone
over $\sigma$ with finite orbit. Take a sector $\Lambda(\sigma)$ inside this cone which is a fundamental domain for the action of $\Mod(S,\sigma)$.   
For any $(x_1,\ldots ,x_d)\in \Lambda(\sigma)$  (no $x_i=0$), let  
\begin{equation}
\label{eq:define}
\widetilde{\Psi}(x_1,\ldots ,x_d):=\pi(X)
\end{equation}
where $\pi(X)$ is any  point of $\pi(\Omega_\sigma(\epsilon))$ 
such that $$\ell_X(\alpha_i)=\epsilon e^{-x_i} \ \ \mbox{for each $i$}.$$
Using the action of $\Mod(S,\sigma)$  we extend $\widetilde{\Psi}$ to the entire open cone on $\sigma$.
Note that $\widetilde\Psi$ is continuous on each open cone. We do this for each maximal cone in the finite collection. 
Now use the action of $\Mod(S)$ to extend $\widetilde{\Psi}$ to the open cones on all maximal simplices by having it be constant on orbits. 
 
  Next  let $\tau$ be a simplex which is not maximal.  Choose some closed maximal simplex $\sigma=\sigma(\tau)$ containing $\tau$.  We call this the maximal simplex {\em associated} to $\tau$. The cone on $\tau$ is given by the coordinates $(x_1,\ldots ,x_d)$ for the cone on $\sigma$ as above.    The coordinates
 $x_i$ corresponding to curves in $\sigma-\tau$ are set to $0$.  Define $\widetilde{\Psi}$ on $\tau$ via the equation (\ref{eq:define}) above.  Thus  all curves in $\sigma-\tau$ are assigned the fixed length $\epsilon$ while the curves in $\tau$ can have arbitrarily small length. We extend $\widetilde{\Psi}$ to all of $\widetilde{\V}(S)$ by declaring $\widetilde{\Psi}$ to be constant on each $\Mod(S)$-orbit in $\widetilde{\V}(S)$.  It follows that $\widetilde{\Psi}$ induces a map $\Psi:\V(S)\to\M(S)$.  We remark that $\Psi$ will in general not be continuous  because of the choices made at a face of a maximal  simplex.
Nonetheless we want to know that the jump in the function at any face is uniformly bounded. 
We will argue this below using 
Lemma \ref{lemma:ratios} together with the following lemma.

\begin{lemma}
\label{lemma:choices}
Let $\tau$ be a simplex. Let $\sigma_1$ a maximal simplex associated to $\tau$ and let $\sigma_2$ be any other maximal simplex such that 
$\tau=\sigma_1\cap\sigma_2$.  Then there exists an element $\phi\in\Mod(S)$, fixing 
$\tau$, such that  for each 
$x$ in the cone over $\tau$ there is a point $X\in\Teich(S)$ with $\pi(X)=\widetilde{\Psi}(x)$ and such that the $X$-length of any curve in $(\sigma_1-\tau)\cup (\phi(\sigma_2)-\tau)$ 
is bounded above by a universal constant, and below by the fixed $\epsilon$.
\end{lemma}

\proof
The coordinates
for curves in $\sigma_1-\tau$ on the cone over $\tau$ are $0$.
By definition, each curve $\beta\in\sigma_1-\tau$ then has fixed
length $\epsilon$ on some $X$ with $\pi(X)=\widetilde\Psi(x)$.
The curves in $\sigma_2-\tau$ may have large intersection with
curves in $\sigma_1-\tau$ and therefore large length on $X$.
However, since there are only finitely many combinatorial types of
pants decompositions, we can choose $\phi$ fixing $\tau$ so that
any curve in $\phi(\sigma_2)-\tau$ has universally bounded
intersection with any curve in $\sigma_1-\tau$.  Since
$\ell_X(\beta)=\epsilon$ for each $\beta\in\sigma_1-\tau$, the
collar about $\beta$ has diameter bounded above.  Thus we can
further compose $\phi$ by Dehn twists about $\beta$, so that for
the new $\phi$, the curves in $\phi(\sigma_2)-\tau$ have bounded
lengths on $X$.
\endproof

\subsection{Properties of $\Psi$}

Our goal in this subsection is to prove that $\Psi$ is a $(1,D)$-quasi-isometry.  In order to do this we will need the following setup.

Let $\sigma$ a maximal simplex. Recall $P$ is the quotient map from  $\CC(S)$ to $\CC(S)/\Mod(S)$. 
Let $d_{P(\sigma)}$ be the path metric on the cone over  $P(\sigma)$ and  let 
 $d^{\M(S)}_{P(\sigma)}$ be the path metric on the (connected) $\Psi$ image of the cone over $P(\sigma)$  in $\M(S)$  induced from the Teichm\"{u}ller metric on $\M(S)$.  
That is, the distance between two points in the image is the infimum of the lengths of  paths joining the points that stays in the image of the cone over $P(\sigma)$.
\begin{lemma}
\label{lem:compare}
There is a constant $D_0$ such that if $x_1,x_2$ lie in the cone over $P(\sigma)$, then 
$$|d_{P(\sigma)}(x_1,x_2)-d_{P(\sigma)}^{\M(S)}(\Psi(x_1),\Psi(x_2))|\leq D_0.$$

\end{lemma}

\begin{proof}
We may find a lift $X_i$ of $\Psi(x_i)$ to $\Teich(S)$ such that 
the difference of the twist coordinates of $X_1$ and $X_2$ with respect to the Fenchel-Nielsen coordinates defined by $\sigma$ are bounded and such that 
$$d_{\Teich(S)}(X_1,X_2)=d_{P(\sigma)}^{\M(S)}(\Psi(x_1),\Psi(x_2)).$$
If $x_1$ and $x_2$ lie in the open cone over $P(\sigma)$, then the lemma follows from Theorem~\ref{thm:Min} 
and the definition of the metric $d_{P(\sigma)}$.  If not, then one must further quote Lemma \ref{lemma:choices} and Lemma \ref{lemma:ratios}.
\end{proof}

\bigskip
\noindent
{\bf $\Psi$ is almost onto: }By a theorem of Bers, there is a
constant $C=C(g,n)$ such that each $X\in\Teich(S)$ has a pants
decomposition corresponding to a maximal simplex $\sigma$ such
that each curve of $\sigma$ has length at most $C$ on $X$.  With
respect to these pants curves, each of the twist coordinates is
bounded, modulo the action of Dehn twists about the curves in
$\sigma$, by $2\pi C$.  Let $\tau$ be the possibly empty face of $\sigma$ such that
the set of curves in $\sigma-\tau$ have lengths on $X$ between
$\epsilon$ and $C$.  The curves in $\tau$ have length at most $\epsilon$. 
By Lemma \ref{lemma:choices}, there is a
point $Y\in\Teich(S)$ such that $\pi(Y)$ is in the $\Psi$-image of the cone
on $\tau$, and such that the lengths on $Y$ of the curves in $\tau$
are the same as the lengths on $X$ of those curves, and the
curves in $\sigma-\tau$ have bounded length on $Y$. Thus their
ratios to the lengths on $X$ are bounded.  Applying
Lemma~\ref{lemma:ratios}, we are done.

\bigskip
\noindent
{\bf $\Psi$ is an almost isometry: }  We need the following lemma.

\begin{lemma}[Path Lemma]
\label{lemma:path}
The following statements are true.

\begin{enumerate}
\item Any two points in $\V(S)$   can be joined by a geodesic
that enters the cone over each  $P(\sigma)$, where $\sigma$ is 
a  maximal simplex of $\widetilde{V(S)}$,  at most once.
\item There is a constant $C'$ such that  any two points of $\Psi(\V(S))$ can be joined by a $(1,C')$ quasi-geodesic in the metric $d_{\M(S)}$ that enters the cone over each $P(\sigma)$  
at most once.
\end{enumerate}
\end{lemma}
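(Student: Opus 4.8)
\medskip
\noindent\textbf{Proof strategy.} The plan is to deduce part (1) from the fact that each closed cell of $\V(S)$ is isometrically embedded, and then to obtain part (2) by pushing a geodesic produced by part (1) into $\M(S)$ via $\Psi$, controlling the result cell by cell with Minsky's estimates \eqref{eq:m1}--\eqref{eq:m2}.

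For part (1), the first step is to observe that each closed cell $\cone(\tau)$ of $\widetilde{\V}(S)$, with its $\sup$ metric, is isometrically embedded in $\widetilde{\V}(S)$. Indeed, for every simple closed curve $\alpha$ occurring in some pants decomposition define $x_\alpha\colon\widetilde{\V}(S)\to[0,\infty)$ to be, on each octant containing $\alpha$, the corresponding coordinate, and identically $0$ on every other octant; these definitions agree on common faces, and on each octant $x_\alpha$ is either constant or one of the $\sup$-metric coordinates, so $x_\alpha$ is globally $2$-Lipschitz. Hence for $a,b\in\cone(\tau)$ we get $d_{\widetilde{\V}(S)}(a,b)\ge\frac12\max_{\alpha\in\tau}|x_\alpha(a)-x_\alpha(b)|=d_{\cone(\tau)}(a,b)$, and the reverse inequality is obvious. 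A short argument, tracking the cost incurred when a mapping class carries a curve out of a pants decomposition, upgrades this to the statement that each closed cell of the quotient $\V(S)$ is isometrically embedded. The second step is a minimal-position argument. Since $\V(S)$ is a complete, locally compact length space, any two points are joined by a geodesic; choose one meeting the fewest open simplices of $\V(S)$, counted with multiplicity. If some open simplex were met twice, let $a,b$ be the first and last points of the geodesic lying in the corresponding closed cell and replace the subarc between them by a geodesic inside that cell: by the isometric embedding this changes neither the length of the subarc (which was geodesic) nor the behaviour of the path outside the cell, yet it strictly decreases the number of simplices met, contradicting minimality. Because $\V(S)$ has only finitely many simplices, the same analysis bounds by a constant $N=N(S)$ the number of simplices met by such a geodesic.

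For part (2), let $\bar\gamma$ be a geodesic in $\V(S)$ as in part (1) joining the two preimages under $\Psi$ of the given points; it crosses at most $N$ cells, say $\bar\tau_1,\dots,\bar\tau_k$, with successive boundary-crossing points $z_0,\dots,z_k$. The proposed quasi-geodesic is $\Psi\circ\bar\gamma$; since $\Psi$ is injective on each closed cell and $\bar\gamma$ meets each simplex of $\V(S)$ at most once, $\Psi\circ\bar\gamma$ meets each simplex of $\Psi(\V(S))$ at most once. Parametrizing $\Psi\circ\bar\gamma$ by arclength along $\bar\gamma$ and taking two of its points $P=\Psi(\bar\gamma(s))$ and $Q=\Psi(\bar\gamma(t))$, the upper bound $d_{\M(S)}(P,Q)\le d_{\V(S)}(\bar\gamma(s),\bar\gamma(t))+ND'$ follows by concatenating, over the at most $N$ cells traversed between $\bar\gamma(s)$ and $\bar\gamma(t)$, a Teichm\"uller geodesic between the lifts of the endpoints of the in-cell subarc, each of which has length at most the $\sup$-length of that subarc plus $D'$ by \eqref{eq:m2} and the fact that $\widetilde{\Psi}$ carries a cell isometrically into a product region of $(\hyp^2)^d$. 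For the matching lower bound, each consecutive pair $z_{i-1},z_i$ lies in a common cell, so all the curves of that cell are short at both points, and \eqref{eq:m1} applied to their lifts gives $d_{\M(S)}(\Psi z_{i-1},\Psi z_i)\ge d_{\V(S)}(z_{i-1},z_i)-D'$; feeding these $k\le N$ estimates, together with the fact that $z_0,\dots,z_k$ chain a path in $\V(S)$ from $\bar\gamma(s)$ to $\bar\gamma(t)$, into the computation of $d_{\M(S)}(P,Q)$ as an infimum over $\Mod(S)$ yields $d_{\M(S)}(P,Q)\ge d_{\V(S)}(\bar\gamma(s),\bar\gamma(t))-ND'$. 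Taking $C'=ND'$ completes part (2).

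\medskip
\noindent\textbf{Main obstacle.} I expect two points to require real care. First, in part (1), upgrading the isometric embedding of a single octant $\cone(\tau)$ in $\widetilde{\V}(S)$ to the isometric embedding of a cell of $\V(S)$ is delicate, because $\Mod(S)$ need not realize every permutation of the curves of a pants decomposition and a mapping class can seemingly shortcut by carrying curves out of $\tau$; one must verify that such a move never actually decreases distance. Second, and more seriously, Minsky's theorem \eqref{eq:m1}--\eqref{eq:m2} controls the Teichm\"uller \emph{distance} between pairs of points inside a single product region, not the lengths of curves there, so the whole of part (2) hinges on having arranged in part (1) that the geodesic meets each simplex at most once and hence crosses only $N=N(S)$ cells: this is precisely what lets the additive errors $D'$ coalesce into one constant $C'$ instead of accumulating, and what guarantees that the endpoints of each in-cell subarc lie in a common product region so that \eqref{eq:m1}--\eqref{eq:m2} apply at all. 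The $\Mod(S)$-bookkeeping in the lower bound, $d_{\M(S)}$ being an infimum of Teichm\"uller distances over the group, must be handled by choosing all the lifts to track one $\M(S)$-geodesic and invoking the $\Mod(S)$-equivariance of $\widetilde{\Psi}$.
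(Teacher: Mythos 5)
Your part (1) tracks the paper closely. The observation that each $x_\alpha$ is globally $2$-Lipschitz, hence that each octant $\cone(\tau)$ is isometrically embedded in $\widetilde{\V}(S)$, is a clean formulation of the key idea, and your minimal-position surgery is the paper's replacement argument. You correctly flag the real content as the passage to the $\Mod(S)$-quotient: the paper's Lemma~\ref{lemma:path2}(1) does exactly this, by fixing a curve $\gamma_1$ realizing the sup-distance in $\Delta$ and arguing that for any $\phi\in\Mod(S)$, a path from $\phi(x)$ to $y$ either stays in simplices containing $\gamma_1$ (where the Lipschitz estimate on the $\gamma_1$-coordinate applies directly) or, at the moment it last leaves the region where $\gamma_1$ is a vertex, the $\gamma_1$-coordinate is zero, forcing the tail to have length at least $y_1/2$. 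Your sketch gestures at this but does not carry it out; that is where the actual work lies.

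In part (2) there is a genuine gap. You want to show that $\Psi\circ\bar\gamma$ is a $(1,C')$ quasi-geodesic, and for the lower bound you write that the estimates $d_{\M(S)}(\Psi z_{i-1},\Psi z_i)\ge d_{\V(S)}(z_{i-1},z_i)-D'$, fed into ``the computation of $d_{\M(S)}(P,Q)$ as an infimum over $\Mod(S)$,'' yield $d_{\M(S)}(P,Q)\ge d_{\V(S)}(\bar\gamma(s),\bar\gamma(t))-ND'$. This inference is backwards: the triangle inequality gives $d_{\M(S)}(P,Q)\le\sum_i d_{\M(S)}(\Psi z_{i-1},\Psi z_i)$, so lower bounds on the consecutive distances cannot be chained into a lower bound on $d_{\M(S)}(P,Q)$. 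Moreover, estimate \eqref{eq:m1} only lower-bounds $d_{\Teich(S)}$ when the two compared points of Teichm\"uller space share a product region, which need not hold for $\widetilde{\Psi}(\bar\gamma(s))$ and $\phi\,\widetilde{\Psi}(\bar\gamma(t))$ for arbitrary $\phi$; controlling all such $\phi$ is exactly the content of Lemma~\ref{lemma:path2}(2), whose proof analyzes, for any $\phi$ that carries $\gamma_1$ out of $\Delta$, where a connecting Teichm\"uller path must travel (namely, through a region where $\gamma_1$ becomes short) and then invokes \eqref{eq:m1} along that path. There is also a latent circularity in your route: bounding the length of $\Psi\circ\bar\gamma$ above by $d_{\M(S)}(\Psi x,\Psi y)+C'$ would require $d_{\V(S)}(x,y)\le d_{\M(S)}(\Psi x,\Psi y)+C''$, which is precisely the half of Theorem~\ref{theorem:model} that the Path Lemma is used to prove. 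The paper avoids both problems by first proving the single-simplex quasi-geodesic statement directly against all of $\Mod(S)$ (Lemma~\ref{lemma:path2}(2)) and then extending by the same surgery as in part (1); you need that intermediate lemma, not a direct push-forward.
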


A first step in proving Lemma \ref{lemma:path} is the following.

\begin{lemma}
\label{lemma:path2}
The following statements are true.

\begin{enumerate}
\item Suppose $x,y$ are points in the cone over $P(\sigma)$ where $\sigma$ is a maximal simplex. 
Then there is a geodesic joining $x$ and $y$ that stays in the cone over that $P(\sigma)$. 
\item There is a constant $C''$ such that if  $\Psi(x),\Psi(y)$ lie in the cone over $P(\sigma)$ then 
$$d_{P(\sigma)}^{\M(S)}(\Psi(x),\Psi(y))\leq d_{\M(S)}(\Psi(x),\Psi(y))+C''.$$
\end{enumerate}
\end{lemma}

We note that the opposite 
inequality $$d_{\M(S)}(\Psi(x),\Psi(y))\leq d_{P(\sigma)}^{\M(S)}(\Psi(x),\Psi(y))$$ is clearly true. 

\begin{proof}[of Lemma \ref{lemma:path2}]
We prove the first statement.  Lift to $\widetilde{\V}(S)$ and consider again $x,y$ with the same names such that the distance 
in the cone over $\sigma$ realizes the distance between $x$ and $y$ in the cone over $P(\sigma)$. Let the coordinates of $x,y$ be given by  
$(x_1,\ldots, x_d), (y_1,\ldots, y_d)$.  Suppose $\sigma$ is defined by the curves 
$\alpha_1,\ldots,\alpha_d$ of a pants decomposition.  Without loss of generality assume that  
$d_\sigma(x,y)=\frac{1}{2}(y_1-x_1)$. 
We must show that, for every $\phi\in\Mod(S)$, that does not fix $\sigma$, there is no shorter path 
$\rho$ in $\widetilde{\V}(S)$ from $\phi(x)$ to
$y$.

Suppose first that $\alpha_1$ is not a vertex in the simplex $\phi(\sigma)$. 
Then the path from $x$ to $y$ for a last time must enter the cone over 
a simplex for which $\alpha_1$ is a vertex at a point $z$. At $z$ the coordinate corresponding to $\alpha_1$ is $0$, and so $$d_{\V(S)}(y,z)\geq 
y_1/2\geq d_\sigma(x,y).$$ 
Thus we may 
assume that the path $\rho$ joining $\phi(x)$ to $y$ lies completely 
in the cones over simplices for which $\alpha_1$ is a vertex.  Break up this
path into segments 
$\rho=\rho_1*\rho_2*\ldots *\rho_N$, where each $\rho_i$ lies in the cone over a single
simplex. Let $z_1^i$ (resp. $z_1^{i+1}$) be the coordinate of  $\alpha_1$ at the
beginning  (resp. end) of $\rho_i$, where $z_1^1=x_1$ and $z_1^{N+1}=y_1$. 
Then $|\rho_i|\geq \frac{1}{2}|z_1^{i+1}-z_1^i|$.
Thus $$|\rho|=\sum_{i=1}^N |\rho_i|\geq \sum\frac{1}{2} |z_1^{i+1}-z_1^i|\geq \frac{1}{2}(y_1-x_1)=d_{\widetilde{V}(S)}(x,y).$$  We conclude that a shortest path can be found by a geodesic that lies entirely in the cone over $\sigma$

We prove the second statement.  First lift $\Psi(x),\Psi(y)$ to
elements $X,Y\in\Teich(S)$ which lie in
$\Omega_\sigma(\epsilon)$, and such that $$d_{P(\sigma)}^{\M(S)}(\Psi(x),\Psi(y))=d_{\Teich(S)}(X,Y)$$ and whose twist coordinates are
bounded by $2\pi\epsilon$.  By Theorem~\ref{thm:Min}, there
exists a simple closed curve $\alpha_1\in\sigma$ such that
$$|d_{\Teich(S)}(X,Y)-\frac{1}{2}\log(\ell_Y(\alpha_1)/\ell_X(\alpha_1))|\leq
D'$$
where $D'$ depends on $\epsilon$ and on the constant $D$ from
Theorem \ref{thm:Min}.  Thus 
\begin{equation}
\label{eq5}
|d_{P(\sigma)}^{\M(S)}(\Psi(x),\Psi(y))-\frac{1}{2}\log(\ell_Y(\alpha_1)/\ell_X(\alpha_1))|\leq
D'.
\end{equation}

Now let $\phi$ be a mapping class group element.  If $\alpha_1$ is
not a vertex of $\phi(\sigma)$ then any path $\rho$ from
$\phi(Y)$ to $X$ must enter a set 
 $\Omega_{\cal C}(\epsilon)$
for some $\cal C$ containing $\alpha_1$ a last time.  At that
time the length of $\alpha_1$ is $\epsilon$.  By Theorem
\ref{thm:Min} and Equation (\ref{eq5}) we then have $$|\rho|\geq
\frac{1}{2}\log(\epsilon/\ell_X(\alpha_1))-D\geq
d_{P(\sigma)}^{\M(S)}(\Psi(y),\Psi(x))+
\frac{1}{2}\log(\epsilon/\ell_Y(\alpha_1))-D-D'.$$
  Since $\ell_Y(\alpha_1)$ is bounded above, the 
term $\frac{1}{2}\log(\epsilon/\ell_Y(\alpha_1))-D-D'$ is bounded below by some constant, and we 
set $-C''$ to be this constant.

Thus again we can assume that $\rho$ lies completely in $\Omega_{\cal C}(\epsilon)$ for a set $\cal C$ containing $\alpha_1$.  But now the conclusion again follows from Theorem~\ref{thm:Min}. 
\end{proof}

\begin{proof}[of Lemma \ref{lemma:path}]
Suppose $x$ is in the cone over $P(\sigma_1)$ and that $y$ is in the
cone over $P(\sigma_2)$.  If $P(\sigma_1)=P(\sigma_2)$ then we
are done by Lemma \ref{lemma:path2}.  Thus we can assume that
$P(\sigma_2)\neq P(\sigma_1)$.  Suppose $\rho$ is a geodesic from
$x$ to $y$.  Suppose $\rho$ leaves the cone over $P(\sigma_1)$
and returns to it for a last time at some $z$ in the cone over
$P\sigma_1)\cap P(\sigma_3)$ for some maximal simplex
$\sigma_3$. Then by the first part of Lemma \ref{lemma:path2} we
can replace $\rho$ by a geodesic that stays in the cone over
$P(\sigma_1)$ from $x$ to $z$ and then follows $\rho$ from $z$ to
$y$ never returning to the cone over $P(\sigma_1)$. We now find
the last point $w$ that lies in the cone over $P(\sigma_3)$ and
replace a segment of $\rho$ with one that stays in the cone over
$P(\sigma_3)$ and never returns again to the cone over
$P(\sigma_3)$. Since there are only a finite number of simplices
in $\CC(S)/\Mod(S)$, continuing to apply Lemma \ref{lemma:path2},  we are done.  This proves the first
statement.

The proof of the second statement is similar, where we now use the second part of  Lemma \ref{lemma:path2}.
\end{proof}

We now continue with the final step in the proof of Theorem
\ref{theorem:model}, that the map $\Psi$ is an almost isometry.
We first prove that $$d_{\M(S)}(\Psi(x),\Psi(y))\leq
d_{\V(S)}(x,y)+R$$ for some constant $R$.  To prove this,
consider a geodesic path $\gamma\subset\V(S)$ connecting $x$ to
$y$.  By the first statement of Lemma \ref{lemma:path}, there
exists $c=c(S)$ so that $\gamma$ can be written as a
concatenation $\gamma=\gamma_1\ast\cdots\ast\gamma_c$ with each
$\gamma_i$ a geodesic in the cone over $P(\sigma)$ for $\sigma$ a maximal simplex
$\sigma_i$ of $\V(S)$.  By Lemma~\ref{lem:compare} each
$\Psi(\gamma_i)$ is a $(1,D_0)$-quasigeodesic in the metric
$d_{\M(S)}$.  It follows that $\Psi(\gamma)$ is a
$(1,cD')$-quasigeodesic.

The proof of the opposite inequality 
$$d_{\V(S)}(x,y)\leq d_{\M(S)}(\Psi(x),\Psi(y))+R'$$ 
for some $R'$ uses the second conclusion  of Lemma \ref{lemma:path}.  Any two points can be joined by $(1,C')$ quasi-geodesic in the metric $d_{\M(S)}$ and which intersects a fixed number of cones over image simplices $P(\sigma)$.   We now apply Lemma~\ref{lem:compare} to conclude that $d_{\V(S)}(x,y)$ is only larger by an additive constant.

\noindent
Dept. of Mathematics, University of Chicago\\
5734 University Ave.\\
Chicago, Il 60637\\
E-mail: farb@math.uchicago.edu, masur@math.uchicago.edu

\end{document}